\let\OLDthebibliography\thebibliography
\renewcommand\thebibliography[1]{
	\OLDthebibliography{#1}
	\setlength{\parskip}{0pt}
	\setlength{\itemsep}{0pt plus 0.3ex}
}
\renewcommand{\le}{\leqslant}
\renewcommand{\ge}{\geqslant}
\newtheorem{Theorem}{Theorem}
\newtheorem{Corollary}{Corollary}
\newcommand{\R}{\mathbb R}
\newcommand{\N}{\mathbb N}
\newcommand{\x}{\mathbf x}
\newcommand{\y}{\mathbf y}
\newcommand{\M}{\mathbb M}
\begin{document}

\date{}
\title{A note on Borsuk's problem in Minkowski spaces}
\author{
	Andrei M. Raigorodskii\thanks{Moscow Institute of Physics and Technology, Moscow, Russia; Moscow State University, Moscow, Russia; Caucasus Mathematical Center, Adyghe State University, Maykop, Russia; Buryat State University, Ulan-Ude, Ruassia. Email:~\href{mailto:mraigor@yandex.ru}{\tt mraigor@yandex.ru}.}
	\and 
	Arsenii Sagdeev\thanks{Alfréd Rényi Institute of Mathematics, Budapest, Hungary; Moscow Institute of Physics and Technology,  Moscow, Russia. Email:~\href{mailto:sagdeevarsenii@gmail.com}{\tt sagdeevarsenii@gmail.com}.}
}

\maketitle

\begin{abstract}
	In 1993, Kahn and Kalai famously constructed a sequence of finite sets in $d$-dimensional Euclidean spaces that cannot be partitioned into less than $(1.203\ldots+o(1))^{\sqrt{d}}$ parts of smaller diameter. Their method works not only for the Euclidean, but for all $\ell_p$-spaces as well. In this short note, we observe that the larger the value of $p$, the stronger this construction becomes.
\end{abstract}

\section{Introduction}

For a $d$-dimensional (Minkowski) normed space $\M^d$, its \textit{Borsuk's number} $b(\M^d)$ is defined as the minimum $k \in \N$ such that every bounded non-singleton subset of $\M^d$ can be partitioned into $k$ parts of smaller diameter. This notion was introduced by Gr\"unbaum, who proved in~\cite{Grun57} that $b(\M^2) \le 4$ and that this inequality is strict unless the `unit circle' of the Minkowski plane $\M^2$ is a parallelogram. Boltyanskii and Gohberg~\cite{BG65} studied this problem for larger dimensions and conjectured that the inequality $b(\M^d) \le 2^d$ always holds. Currently the best known upper bound is $ b(\M^d) = O(2^dd\ln d)$, see \cite{RZ97}.

When proposing the notion of Borsuk's number, Gr\"unbaum was motivated by classical Borsuk's `conjecture' (see~\cite{Bor}) that every bounded non-singleton set in the Euclidean space $ \R^d $ could be partitioned into $ d+1 $ parts of smaller diameter. In the above notation, this conjecture states that $ b(\R^d) = d+1 $, since a regular simplex in $ \R^d $ does certainly require $ d+1 $ parts. Borsuk's conjecture is confirmed for $ d \le 3 $, see surveys \cite{Rai1, Rai2} for details. However, it was dramatically disproved in 1993 by Kahn and Kalai~\cite{KK93}, who found counterexamples for all $ d \ge 2015 $ and proved that, in general, $ b(\R^d) \ge (1.203\ldots+o(1))^{\sqrt{d}} $ as $d \to \infty$. Now it is known that $ b(\R^{64}) > 65 $ and that $ b(\R^d) \ge (1.2255\ldots+o(1))^{\sqrt{d}} $,  see \cite{Bon, Jen} and \cite{Rai3}, respectively. At the same time, it is shown that $ b(\R^d) \le 2^{d-1}+1 $ for all $d \in \N$, see~\cite{Las}, and that asymptotically $ b(\R^d) \le (1.224\ldots+o(1))^{d} $, see \cite{Sch, BL}. Note that the latter upper bound is much stronger than the general one from the Boltyanskii--Gohberg conjecture. For $ d \in [4,63] $, Borsuk's conjecture remains open. 

In this short note, we focus on the $d$-dimensional spaces $\ell_p^d$ equipped with the $\ell_p$-norm. (Recall that for $\x = (x_1,\dots,x_d) \in \R^d$, its \textit{$\ell_p$-norm} is given by $\|\x\|_p = (\sum_{i=1} \hspace{-7mm} \phantom{a}^d \hspace{3mm} |x_i|^p)^{1/p}$ for each real $p\ge 1$, and by $\|\x\|_\infty = \max_{i}|x_i|$ in case $p=\infty$.) It is easy to see that $b(\ell_\infty^d) = 2^d$ for all $d \in \N$. The Boltyanskii--Gohberg conjecture that $b(\M^d) \le 2^d$ was confirmed for all $\ell_p^d$ spaces in case $d=3$ by Yu and Zong~\cite{YZ09}, and very recently in case $d=4$ by Wang and Xue~\cite{WX22}. To the best of our knowledge, the conjecture remains open for each real $p\neq 2$, and $d>4$. As for the lower bounds on $b(\ell_p^d)$, we show that a modification of Kahn and Kalai's approach yields the following.

\begin{Theorem} \label{T_Main}
	Given $n,k \in \N$ and  $p, \lambda \in \R$ such that $1<k<n/2$, $p\ge 1$, $-1/2 \le \lambda \le 0$, put $d = \binom{n}{2}$,
	\begin{equation*}
		t_0 = \frac{(3k-n)|\lambda|^p+k|1+\lambda|^p+(1/2-k)|1+ 2\lambda|^p}{2|\lambda|^p+2|1+\lambda|^p-|1+2\lambda|^p}.
	\end{equation*}
	Let $t_1\le \lfloor t_0+1/2 \rfloor$ be the largest integer such that $k-t_1$ is a prime or a prime power. If $0<t_1 < k/2$, then
	\begin{equation*}
		b(\ell_p^d) \ge \binom{n}{k} \Big/ \binom{n}{k-t_1-1}.
	\end{equation*}
\end{Theorem}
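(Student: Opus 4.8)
The plan is to place inside $\ell_p^d$ (here $d=\binom n2$) an explicit $\binom nk$-point set $\Omega$ in which the diameter is realized by exactly one ``intersection type'' of pairs, and then to bound any part of smaller diameter by the Frankl--Wilson inequality. Index the coordinates of $\R^d$ by the two-element subsets $\{i,j\}$ of $[n]$. To a $k$-subset $A\subseteq[n]$ attach the vector $x^A\in\R^n$ with $x^A_i=1+\lambda$ for $i\in A$ and $x^A_i=\lambda$ for $i\notin A$, and then the point $v_A\in\R^d$ given by $(v_A)_{\{i,j\}}=x^A_i x^A_j$; set $\Omega=\{v_A:A\in\binom{[n]}{k}\}$. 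The map $A\mapsto v_A$ is injective: for $\lambda\notin\{-\tfrac12,0\}$ the support of the largest coordinates of $v_A$ is exactly $A$, while for the two borderline values $v_A$ determines $A$ only up to complementation, and $A^c$ is not a $k$-set because $n\ne 2k$. Hence $\Omega$ is a bounded, non-singleton subset of $\ell_p^d$ with $|\Omega|=\binom nk$, and it remains to show that $\Omega$ cannot be split into fewer than $\binom nk/\binom n{k-t_1-1}$ parts of smaller diameter.

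First I would compute the pairwise distances. For distinct $A,B$ let $t=k-|A\cap B|\in\{1,\dots,k\}$, so $|A\setminus B|=|B\setminus A|=t$ and $|[n]\setminus(A\cup B)|=n-k-t$. Sorting the coordinates $\{i,j\}$ according to how $i$ and $j$ distribute over the four blocks $A\cap B$, $A\setminus B$, $B\setminus A$, $[n]\setminus(A\cup B)$, one checks that $(v_A)_{\{i,j\}}-(v_B)_{\{i,j\}}$ is $0$ except in three block-patterns, where it equals $\pm(1+2\lambda)$, $\pm(1+\lambda)$, $\pm\lambda$, occurring $t(t-1)$, $2t(k-t)$, $2t(n-k-t)$ times respectively. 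Therefore
\[
\|v_A-v_B\|_p^{\,p}=F(t):=t(t-1)\,|1+2\lambda|^p+2t(k-t)\,|1+\lambda|^p+2t(n-k-t)\,|\lambda|^p .
\]
Since $-\tfrac12\le\lambda\le 0$ and $p\ge 1$, the $t^2$-coefficient $|1+2\lambda|^p-2|1+\lambda|^p-2|\lambda|^p$ is negative, so $F$ is a strictly concave quadratic in $t$; a short computation shows its vertex is at $t=k-t_0$, with $t_0$ exactly as in the statement. Hence the maximum of $F$ over the achievable integers $t\in\{1,\dots,k\}$ --- that is, $(\operatorname{diam}\Omega)^p$ --- is attained exactly at the integer $T\in\{1,\dots,k\}$ nearest $k-t_0$, so that a subfamily of $\Omega$ has strictly smaller diameter if and only if the corresponding family of $k$-subsets contains no pair meeting in exactly $k-T$ points.

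For the last step I would first arrange that $T=k-t_1$. If $k-\lfloor t_0+\tfrac12\rfloor$ is a prime power this already holds, since then $t_1=\lfloor t_0+\tfrac12\rfloor$ and $k-T=\lfloor t_0+\tfrac12\rfloor$. Otherwise, note that $\lambda\mapsto t_0$ is continuous on $[-\tfrac12,0]$ (its denominator is positive throughout) with $t_0=\tfrac12$ at $\lambda=0$, whereas at the given $\lambda$ one has $t_0\ge\lfloor t_0+\tfrac12\rfloor-\tfrac12\ge t_1-\tfrac12\ge\tfrac12$; by the intermediate value theorem some $\lambda'\in[-\tfrac12,0]$ has $t_0(\lambda')\in(t_1-\tfrac12,\,t_1+\tfrac12)$, hence $\lfloor t_0(\lambda')+\tfrac12\rfloor=t_1$ and, for that $\lambda'$, $T=k-t_1$. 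The number $t_1$ is the same whether read off from $\lambda$ or from $\lambda'$ (for $\lambda'$ it equals $\lfloor t_0(\lambda')+\tfrac12\rfloor=t_1$ because $k-t_1$ is a prime power), and the asserted bound involves only $t_1$, so we may as well work with $\lambda'$ and assume $T=k-t_1$. Now $q:=k-t_1$ is a prime power and $0<t_1<q$ (because $0<t_1<k/2$), so $k\equiv t_1\pmod q$; since the only elements of $\{0,1,\dots,k\}$ congruent to $t_1$ modulo $q$ are $t_1$ and $k$, any family of $k$-subsets of $[n]$ with no pair meeting in exactly $t_1$ points satisfies $|A\cap B|\not\equiv k\pmod q$ for all distinct $A,B$, and the modular Frankl--Wilson inequality bounds such a family by $\binom n{q-1}=\binom n{k-t_1-1}$. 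Thus every smaller-diameter part of $\Omega$ has at most $\binom n{k-t_1-1}$ points, so any partition of $\Omega$ into such parts has at least $\binom nk/\binom n{k-t_1-1}$ parts, giving $b(\ell_p^d)\ge\binom nk/\binom n{k-t_1-1}$.

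The coordinate count behind $F$ and the identification of the vertex with $k-t_0$ are routine algebra. The main obstacle is the last paragraph: one has to reconcile the geometric quantity $t_0$ --- which dictates the intersection size realizing $\operatorname{diam}\Omega$ --- with the number-theoretic requirement that $k-t_1$ be a prime power, so that the forbidden intersection size for a smaller-diameter part is exactly $t_1$ and the congruence $k\equiv t_1\pmod{k-t_1}$ lets Frankl--Wilson apply. (One also has to handle the borderline case $2t_0\in\Z$, where $F$ ties at two adjacent integers and the diameter is realized by two intersection sizes; this too is removed by the above perturbation of $\lambda$.) This interplay is exactly why the theorem is stated with $\lfloor t_0+\tfrac12\rfloor$ and with $t_1\le\lfloor t_0+\tfrac12\rfloor$.
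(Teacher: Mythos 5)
Your construction coincides with the paper's (your $v_A$ is the paper's $\x^*$ translated by the constant vector $\lambda^2\mathbf{1}$, which leaves all distances unchanged), and the rest of your argument — the strictly concave quadratic in the intersection size with vertex governed by $t_0$, the intermediate-value perturbation of $\lambda$ so that the diameter is realized at intersection exactly $t_1$, and the Frankl--Wilson bound plus pigeonhole — follows the paper's proof step for step, the only cosmetic differences being the parametrization by $k-|A\cap B|$ and that you rederive the exact-intersection Frankl--Wilson statement from the modular version via the congruence $k\equiv t_1 \pmod{k-t_1}$, which the paper simply cites. One small correction: in the tie case $t_1=\lfloor t_0+\tfrac12\rfloor$ with $t_0=t_1-\tfrac12$, your claim that the perturbation removes the tie is not justified (there $t_0(\lambda)=t_1-\tfrac12$ while $t_0(0)=\tfrac12$, so the IVT need not produce a value in $(t_1-\tfrac12,t_1+\tfrac12)$), but no repair is needed: only the implication ``smaller diameter $\Rightarrow$ no pair with intersection exactly $t_1$'' is used, and pairs with intersection $t_1$ still realize the diameter when the two adjacent integers tie.
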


For an arbitrary prime power $q$, applying the latter result with $n=4q-1, k=2q-1, \lambda=-1/2$ yields that $b(\ell_p^d) \ge \binom{4q-1}{2q-1}/\binom{4q-1}{q-1}$ regardless of the value of $p\ge 1$. For large $d$, this retrieves the original lower bound $b(\ell_p^d) \ge (3^{3/4}/2+o(1))^{\sqrt{2d}} = (1.203\ldots+o(1))^{\sqrt{d}}$ due to Kahn and Kalai. Though the aforementioned choice of the auxiliary parameters is asymptotically optimal in case of the Euclidean norm and, more generally, whenever $1\le p \le 2$, it is perhaps unexpected that for all $p>2$, we can do better. For the optimal choice of the parameters and for the resulting value of $c(p)$ such that $b(\ell_p^d) \ge (c(p)+o(1))^{\sqrt{d}}$ as $d \to \infty$, see \Cref{Tab2} and \Cref{Fig1}. To check that taking the parameters as in the first three columns of \Cref{Tab2} indeed leads to the bounds from its last column, one should just apply Stirling's approximation of binomial coefficients together with the observation that $t_1 = t_0-o(n)$ since the gaps between consecutive primes are known to be relatively small, see~\cite{BakHarPin2001}. It can be shown that the limit $\lim_{p \to \infty} c(p)$ of our lower bound equals $(\frac{1+\sqrt{2}}{2})^{\sqrt{2}} = 1.304\ldots$ This value represents a natural barrier that cannot be overcome within the current techniques.

There is an important subtlety concerning the last sentence of the previous paragraph. As we mentioned above, in case of $ \ell_2^d = \R^d $, there is a better bound $ b(\R^d) \ge (1.2255\ldots+o(1))^{\sqrt{d}} $. The corresponding approach is slightly different, and we did not succeed in extending it to $ p \neq 2 $. Nevertheless, even our current techniques provide us with much bigger constants $ c(p) $ than $ 1.2255 $, as it can be seen in \Cref{Tab2}.   

One can also discuss small-dimensional counterexamples to Borsuk's conjecture. In case of $ p=2 $, various modifications of Kahn and Kalai's approach led only to disproving the conjecture in dimensions $ d \ge 560 $, see \cite{Rai4, W}. Counterexamples in smaller dimensions are due to different techniques. For $ p \neq 2 $, we prove the following. 

\begin{Corollary} \label{Cor2}
	For all $p \ge 2.81$, we have $b(\ell_p^{406}) \ge 422$.
\end{Corollary}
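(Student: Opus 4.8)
The plan is to obtain \Cref{Cor2} as a direct application of \Cref{T_Main}. First note that $406=\binom{29}{2}$, so I take $n=29$; it then remains to choose $k$ and $\lambda$ so that the lower bound $\binom{29}{k}/\binom{29}{k-t_1-1}$ coming out of \Cref{T_Main} is at least $422$. The hypothesis $0<t_1<k/2$ keeps $k-t_1-1$ at least $\lfloor k/2\rfloor$, so the ratio is never much bigger than $\binom{29}{k}/\binom{29}{\lfloor k/2\rfloor}$; scanning $2\le k\le 14$ one sees that $k=9$ together with $t_1=4$ works: there $k-t_1=5$ is prime, $t_1=4<9/2$, and
\begin{equation*}
	\frac{\binom{29}{9}}{\binom{29}{4}}=\frac{10015005}{23751}>421,
\end{equation*}
so that $b(\ell_p^{406})\ge 422$ follows as soon as we verify that \Cref{T_Main} actually returns $t_1=4$.

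Accordingly I fix $k=9$ and pick $\lambda\in[-1/2,0]$ so that $\lfloor t_0+1/2\rfloor=4$. A convenient choice is $\lambda=-1/3$, for then $|\lambda|=|1+2\lambda|=1/3$ and $|1+\lambda|=2/3$; substituting $n=29$, $k=9$, $\lambda=-1/3$ into the formula for $t_0$ and cancelling $(1/3)^p$ gives
\begin{equation*}
	t_0=\frac{9\cdot 2^{p}-21/2}{2^{p+1}+1}.
\end{equation*}
From this one reads off that $t_0<9/2$ for every value of $p$, whereas $t_0\ge 7/2$ is equivalent to $2^{p}\ge 7$; since $2.81>\log_2 7=2.807\ldots$, the inequality $2^p\ge 7$ holds for all $p\ge 2.81$. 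Hence $7/2\le t_0<9/2$, so $\lfloor t_0+1/2\rfloor=4$, and because $9-4=5$ is prime the largest admissible integer is $t_1=4$, which satisfies $0<4<9/2$. Therefore \Cref{T_Main} applies and yields $b(\ell_p^{406})\ge\binom{29}{9}/\binom{29}{4}>421$; as $b(\ell_p^{406})$ is an integer, this gives $b(\ell_p^{406})\ge 422$.

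I do not anticipate any genuine difficulty here, since everything reduces to an application of \Cref{T_Main}; the only delicate point is the choice of parameters. Because $t_0<9/2$ holds automatically for $n=29$, $k=9$, the single thing that can go wrong is $t_0$ slipping below $7/2$: then the rule defining $t_1$ would return $t_1=2$ (as $9-2=7$ is prime), collapsing the bound to the useless $\binom{29}{9}/\binom{29}{6}\approx 21$. It is precisely the condition $2^p\ge 7$, i.e.\ $p\ge\log_2 7$, that excludes this, and it is the gap between $\log_2 7=2.807\ldots$ and $2.81$ that lets the clean choice $\lambda=-1/3$ suffice for the stated range of $p$.
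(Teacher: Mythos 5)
Your proposal is correct and is essentially the paper's own proof: the same parameters $n=29$, $k=9$, $\lambda=-1/3$, the same verification that $t_0\ge 7/2$ (hence $t_1=4$) for $p\ge 2.81$, and the same final bound $\binom{29}{9}/\binom{29}{4}>421$. Your simplification $t_0=\tfrac{9}{2}-\tfrac{15}{2^{p+1}+1}$, with threshold exactly $p\ge\log_2 7=2.807\ldots$, is in fact the accurate form (the paper's printed $\tfrac{9}{2}-\tfrac{15}{2^{p+1}+2}$ appears to be a harmless typo), so no issues remain.
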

\begin{proof}
	Apply \Cref{T_Main} with $n=29, k=9, \lambda = -1/3$. Note that $t_0=9/2-15/(2^{p+1}+2)$, which is no less than $7/2$ whenever $p\ge2.81$, and thus $t_1=4$. Now \Cref{T_Main} yields that for $d=\binom{29}{2}=406$, we have $b(\ell_p^d) \ge \binom{29}{9}/\binom{29}{4} > 421$, as desired.
\end{proof}

We do not assert that this result is best possible, since the bound we use in the last step of the proof of \Cref{T_Main} is not tight. One can try to use instead slightly better bounds similar to those in \cite{Bog} and \cite[Section~4]{Diss}. This will not provide any asymptotic improvement, but in concrete dimensions, this might work. 

\section{Proof of Theorem~\ref{T_Main}}

Let $V$ be the set of all $\binom{n}{k}$ points in $\{0,1\}^n$ with $k$ unit coordinates. 
We map every $\x=(x_1,\dots,x_n)\in V$ to $\x^* = (x_{i,j})_{1\le i <j \le n}\in \ell_p^d$
defined by $x_{i,j} = x_ix_j+\lambda x_i +\lambda x_j$. Note that each $\x^*$ has precisely $\binom{n-k}{2}$, $k(n-k)$, and $\binom{k}{2}$ coordinates equal to $0$, $\lambda$, and $1+2\lambda$, respectively. So, for all $\x, \y \in V$, each of the $d=\binom{n}{2}$ pairs $(i,j)$ falls into one of the $3^2=9$ groups based one the corresponding values of $x_{i,j}$ and $y_{i,j}$. Moreover, if we denote the number of common units of $\x$ and $\y$ by $t$, then it is not hard to express the sizes of these groups, see \Cref{Tab1}. 

\begin{table}[h]
	\centering
	\begin{tabular}{|c|c|c|c|}
		\hline
		& $x_{i,j}=0$ & $x_{i,j}=\lambda$ & $x_{i,j}=1+2\lambda$  \\ \hline
		\hspace{-10mm} $y_{i,j}=0$ & $\binom{n-2k+t}{2}^{\phantom{l}}_{\phantom{q}}$ & $(n - 2k + t)(k - t)$ & $\binom{k-t}{2}$ \\ \hline
		\hspace{-10mm} $y_{i,j}=\lambda$ & $(n - 2k + t)(k - t)$ & $(k - t)^{2^{\phantom{l}}} + t(n - 2k + t)$ & $t(k - t)$ \\ \hline
		$y_{i,j}=1+2\lambda$ & $\binom{k-t}{2}$ & $t(k - t)$ & $\binom{t}{2}^{\phantom{l}}_{\phantom{q}}$ \\ \hline
	\end{tabular}
	\caption{The number of pairs $(i,j)$ with the prescribed values of $x_{i,j}$ and $y_{i,j}$.}
	\label{Tab1}
\end{table}

Now a routine calculation yields that 
\begin{align*}
	\|\x^*-\y^*\|_p^p = at^2 + bt + c,\ \mbox{ where } \
	  a&= -2|\lambda|^p-2|1+\lambda|^p+|1+2\lambda|^p, \\
	  b&= 2(3k-n)|\lambda|^p+2k|1+\lambda|^p+(1-2k)|1+2\lambda|^p, \\
	  c&= 2k(n-2k)|\lambda|^p+k(k-1)|1+2\lambda|^p.
\end{align*}

It is clear that $|1+\lambda|^p\ge|1+2\lambda|^p$, and thus $a<0$. Therefore, as a function of a real variable $t$, the parabola $at^2 + bt + c$ is concave and reaches its maximum at $t=t_0 \coloneqq -b/(2a)$. Moreover, note that $c>0$. Since the distance from each $\x^*$ to itself trivially equals $0$, one of the parabolas roots is $t=k$, while the other one is $t=c/(ka) < 0$. In particular, this implies that our mapping $\x \to \x^*$ is injective on $V$. 

Next, we observe that as a function of an \textit{integer} variable, this parabola reaches its maximum at the closest integer to the vertex $t=t_0$, namely at $t=\lfloor t_0+1/2 \rfloor$. We claim that if the latter value is positive, then we can increase $\lambda$ to ensure that the distance $\|\x^*-\y^*\|_p$ is maximum whenever $\x$ and $\y$ share precisely $t_1$ common units. Indeed, increasing $\lambda$ all the way to $0$ results in decreasing the parabola vertex $-b/(2a)$ to $1/2 < t_1$, and thus our claim follows from the intermediate value theorem. Finally, the next classic result ensures that every sufficiently large subset of $V$ contains two such points.

\begin{Theorem}[Frankl--Wilson, \cite{FranklWil1981}] \label{th:FW}
	Given $n,k,t \in \N$ such that $2t<k<n/2$ and $k-t$ is a prime or a prime power, let $V$ be the set of all $\binom{n}{k}$ points in $\{0,1\}^n$ with $k$ unit coordinates. Then every subset of more than $\binom{n}{k-t-1}$ elements of $V$ contains two points sharing  precisely $t$ common units.
\end{Theorem}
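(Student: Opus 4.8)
The plan is to prove the contrapositive by the polynomial (linear-algebra) method. So suppose $\mathcal F\subseteq V$ is a family of $k$-subsets of $[n]$, identified with their $0/1$ characteristic vectors $v_A$, such that no two distinct members meet in exactly $t$ points; the goal is to show $|\mathcal F|\le\binom{n}{k-t-1}$. The first move is to recast the forbidden-intersection condition modulo $q:=k-t$. For distinct $A,B\in\mathcal F$ the overlap $|A\cap B|=\langle v_A,v_B\rangle$ is an integer in $[0,k-1]$, and because $2t<k$ the only integer in this range that is congruent to $t$ modulo $q$ is $t$ itself: its neighbours $t-q=2t-k<0$ and $t+q=k$ both lie outside $[0,k-1]$. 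Hence forbidding the exact value $t$ is equivalent to forbidding the residue class $t \bmod q$, while $|A\cap A|=k\equiv t\pmod q$. In short, within $\mathcal F$ one has $|A\cap B|\equiv t\pmod q$ if and only if $A=B$.

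Next I would build a diagonal family of polynomials detecting this residue. Writing $q=p^a$ with $p$ prime, consider the integer-valued polynomial $g(m)=\binom{m-t+q-1}{q-1}$ of degree $q-1$. Expanding $g$ in the integer basis $\binom{m}{0},\dots,\binom{m}{q-1}$ and substituting $m=\langle v_A,x\rangle=\sum_{j\in A}x_j$ shows that, on $0/1$ inputs, $f_A(x):=g(\langle v_A,x\rangle)$ is a multilinear polynomial of degree at most $q-1$ over $\mathbb F_p$, since each $\binom{\langle v_A,x\rangle}{i}$ equals $\sum_{S\subseteq A,\,|S|=i}\prod_{j\in S}x_j$ there. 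A short computation with Lucas' theorem supplies the key divisibility: because the base-$p$ digits of $q-1=p^a-1$ are all equal to $p-1$, the value $g(m)$ is nonzero modulo $p$ precisely when the lowest $a$ base-$p$ digits of $(m-t)+q-1$ all equal $p-1$, that is, when $q\mid m-t$. Combined with the first paragraph, this yields $f_A(v_B)=g(|A\cap B|)\not\equiv 0$ for $A=B$ and $\equiv 0$ for distinct $A,B\in\mathcal F$. This triangular evaluation pattern forces the $f_A$ to be linearly independent as functions $\{0,1\}^n\to\mathbb F_p$, so at this stage $|\mathcal F|\le\sum_{i=0}^{q-1}\binom ni$.

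It remains to squeeze out the lower-order terms and reach the single binomial $\binom{n}{q-1}=\binom{n}{k-t-1}$, and this sharpening is the crux. The idea is to exploit the uniformity $\sum_j(v_A)_j=k$: restricted to the weight-$k$ slice $X$, a multilinear monomial $\prod_{j\in T}x_j$ of degree $|T|=i<q-1$ satisfies the identity $\sum_{j\notin T}\prod_{j'\in T\cup\{j\}}x_{j'}=(k-i)\prod_{j\in T}x_j$, so whenever the scalar $k-i$ is invertible it rewrites a low-degree monomial as a combination of degree-$(i+1)$ ones. Iterating would push every $f_A$, as a function on $X$, into the span of the $\binom{n}{q-1}$ multilinear monomials of degree exactly $q-1$, giving $|\mathcal F|\le\binom{n}{q-1}$. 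The hard part will be precisely this homogenization over the prime field: over $\mathbb R$ the scalars $k-i$ are positive and the reduction is automatic, but modulo $p$ some $k-i$ may vanish, and controlling the diagonal system through the degree-raising step is exactly where the prime-power case becomes delicate. By contrast, the clean modular reduction of the first paragraph and the diagonal construction of the second are routine, so the entire weight of the argument rests on carrying out this last sharpening to the sharp $\binom{n}{k-t-1}$.
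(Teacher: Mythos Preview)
The paper does not prove Theorem~\ref{th:FW}; it is quoted from \cite{FranklWil1981} and used as a black box in the proof of Theorem~\ref{T_Main}, so there is no in-paper argument to compare your sketch against.

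On its own merits: your first two steps are correct and standard. The reduction of the single forbidden intersection size $t$ to the forbidden residue class $t\equiv k\pmod q$ via $2t<k$ is exactly right, and the Lucas-based diagonal family $\{f_A\}$ over $\mathbb F_p$ does give linear independence inside the multilinear polynomials of degree at most $q-1$, hence $|\mathcal F|\le\sum_{i=0}^{q-1}\binom ni$.

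The gap is precisely where you place it, and it is genuine rather than cosmetic. Your degree-raising identity on the $k$-slice needs $k-i$ to be a unit in $\mathbb F_p$ for every $i\in\{0,\dots,q-2\}$; these are $q-1$ consecutive integers, so as soon as $q=p^a$ with $a\ge 2$ one of them is divisible by $p$, and even for $q=p$ prime the condition forces $k\equiv -1\pmod p$, which is not part of the hypotheses. Thus in-field homogenization cannot, in general, push the bound down to the single binomial $\binom{n}{q-1}$. Frankl and Wilson's original argument takes a different route: they work with the \emph{integer} inclusion matrices $W_i$ (rows indexed by $i$-subsets, columns by $k$-subsets) and combinatorial identities among them to produce, before any reduction modulo $p$, a $\binom{n}{q-1}$-row integer matrix whose columns indexed by $\mathcal F$ become independent after reduction. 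That integral detour---doing the dimension cut over $\mathbb Z$ and only then passing to $\mathbb F_p$---is the missing idea; without it your sketch stalls at the sum-of-binomials bound.
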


Now the pigeonhole principle implies that the set $\{\x^*: \x \in V\} \subset \ell_p^d$ cannot be partitioned into less than $\binom{n}{k}/\binom{n}{k-t_1-1}$ parts of smaller diameter, which completes the proof.

\vskip+0.4cm

Note that the proof is based on `$ (0,1) $-vectors', which we map into vectors, whose coordinates may have 3 different values. The result of~\cite{Rai3} is due to changing $ (0,1) $-vectors by $ (-1,0,1) $-vectors. For the moment, we see a big technical problem in mapping such vectors into $ \ell_p^d $ to implement a similar idea as the one in the just-given proof. If the problem is solved, better lower bounds on $b(\ell_p^d)$ for $ p \neq 2 $ may also appear.    

\section*{Acknowledgments} We thank Olga Kostina for the helpful discussion. The first author is supported by the grant NSh-775.2022.1.1. The second author is supported by ERC Advanced Grant `GeoScape' No. 882971.

{\small }

\appendix

\section{Numerical data}

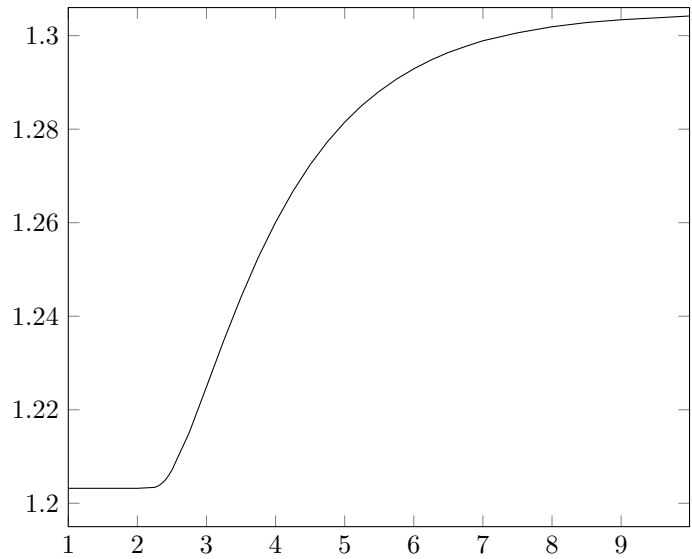
\begin{figure}[h!]
	\centering
	\begin{tikzpicture}
		\begin{axis}[
			width = 280,
			xmin = 1,
			xmax = 9.99,
			ymin = 1.195,
			ymax = 1.306]
			
			\addplot[] table {
				x		y
				1.00 1.2032
				2.00 1.2032
				2.25 1.2034
				2.30 1.2037
				2.35 1.2042
				2.40 1.2049
				2.45 1.2059
				2.50 1.2071
				2.75 1.2151
				3.00 1.2249
				3.25 1.2348
				3.50 1.2441
				3.75 1.2526
				4.00 1.2601
				4.25 1.2667
				4.50 1.2724
				4.75 1.2773
				5.00 1.2815
				5.25 1.2851
				5.50 1.2881
				5.75 1.2907
				6.00 1.2929
				6.25 1.2948
				6.50 1.2964
				6.75 1.2977
				7.00 1.2989
				7.50 1.3006
				8.00 1.3019
				8.50 1.3028
				9.00 1.3034
				9.50 1.3038
				9.99 1.3042
			};
		\end{axis}
	\end{tikzpicture}
	\caption{The graph of the function $c(p)$ for $1\le p \le 10$.}
	\label{Fig1}
\end{figure}

\begin{table}[h]
	\centering
	\begin{tabular}{|c|c|c|c|c|}
		\hline
		$p$ & $-\lambda$ & $k/n$ & $t_0/n$ & $c(p)$  \\ \hline
		1.00 & 0.5000 & 0.5000 & 0.2500 & 1.2032 \\ \hline
		2.00 & 0.5000 & 0.5000 & 0.2500 & 1.2032 \\ \hline
		2.25 & 0.4639 & 0.4777 & 0.2287 & 1.2034 \\ \hline
		2.30 & 0.4472 & 0.4666 & 0.2189 & 1.2037 \\ \hline
		2.35 & 0.4310 & 0.4554 & 0.2095 & 1.2042 \\ \hline
		2.40 & 0.4163 & 0.4448 & 0.2012 & 1.2049 \\ \hline
		2.45 & 0.4031 & 0.4348 & 0.1938 & 1.2059 \\ \hline
		2.50 & 0.3915 & 0.4255 & 0.1874 & 1.2071 \\ \hline
		2.75 & 0.3521 & 0.3895 & 0.1665 & 1.2151 \\ \hline
		3.00 & 0.3317 & 0.3656 & 0.1562 & 1.2249 \\ \hline
		3.25 & 0.3207 & 0.3491 & 0.1510 & 1.2348 \\ \hline
		3.50 & 0.3146 & 0.3372 & 0.1483 & 1.2441 \\ \hline
		3.75 & 0.3112 & 0.3283 & 0.1468 & 1.2526 \\ \hline
		4.00 & 0.3095 & 0.3215 & 0.1460 & 1.2601 \\ \hline
		4.25 & 0.3087 & 0.3162 & 0.1456 & 1.2667 \\ \hline
		4.50 & 0.3085 & 0.3120 & 0.1455 & 1.2724 \\ \hline
		4.75 & 0.3087 & 0.3087 & 0.1455 & 1.2773 \\ \hline
		5.00 & 0.3091 & 0.3060 & 0.1455 & 1.2815 \\ \hline
		5.25 & 0.3097 & 0.3038 & 0.1456 & 1.2851 \\ \hline
		5.50 & 0.3104 & 0.3020 & 0.1457 & 1.2881 \\ \hline
		5.75 & 0.3110 & 0.3005 & 0.1458 & 1.2907 \\ \hline
		6.00 & 0.3118 & 0.2993 & 0.1459 & 1.2929 \\ \hline
		6.25 & 0.3125 & 0.2982 & 0.1459 & 1.2948 \\ \hline
		6.50 & 0.3132 & 0.2974 & 0.1460 & 1.2964 \\ \hline
		6.75 & 0.3138 & 0.2967 & 0.1461 & 1.2977 \\ \hline
		7.00 & 0.3145 & 0.2961 & 0.1462 & 1.2989 \\ \hline
		7.50 & 0.3157 & 0.2951 & 0.1462 & 1.3006 \\ \hline
		8.00 & 0.3168 & 0.2945 & 0.1463 & 1.3019 \\ \hline
		8.50 & 0.3179 & 0.2940 & 0.1463 & 1.3028 \\ \hline
		9.00 & 0.3188 & 0.2937 & 0.1464 & 1.3034 \\ \hline
		9.50 & 0.3196 & 0.2935 & 0.1464 & 1.3038 \\ \hline
		9.99 & 0.3203 & 0.2933 & 0.1464 & 1.3042 \\ \hline
	\end{tabular}
	\caption{The optimal choice of the auxiliary parameters and the resulting values of $c(p)$}
	\label{Tab2}
\end{table}

\end{document}